\numberwithin{equation}{section}
\DeclareMathOperator{\sgn}{sgn}
\newcommand{\no}{\noindent}
\newcommand{\be}{\begin{equation}} 
\newcommand{\bea}{\begin{eqnarray}}
\newcommand{\ee}{\end{equation}}
\newcommand{\beas}{\begin{eqnarray*}}
\newcommand{\eea}{\end{eqnarray}}
\newcommand{\eeas}{\end{eqnarray*}}
\def\C{{\mathbb C}}
\def\z3{{\mathbb Z_3}}
\def\sg{\mathfrak S}
\newtheorem{theorem}{Theorem}[section]
\newtheorem{corollary}[theorem]{Corollary}
\newtheorem{lemma}[theorem]{Lemma}
\newtheorem{rem}[theorem]{Remark}
\begin{document}
\title{A new presentation for Specht modules with distinct parts}
\author[Friedmann]{Tamar Friedmann}
\address{Department of Mathematics, Colby College}
\email{tfriedma@colby.edu}

\author[Hanlon]{Phil Hanlon}
\address{Department of Mathematics, Dartmouth College}
\email{philip.j.hanlon@dartmouth.edu}

\author[Wachs]{Michelle L. Wachs$^*$}
\address{Department of Mathematics, University of Miami}
\email{wachs@math.miami.edu}
\thanks{$^{*}$Supported in part by NSF grant
 DMS 2207337.}

\begin{abstract}  
We obtain a new presentation for Specht modules whose conjugate shapes have strictly decreasing parts by introducing a linear operator on the space generated by column tabloids. The generators of the presentation are column tabloids and the relations form a proper subset of the Garnir relations of Fulton.  The results in this paper  generalize  earlier results of the authors and Stanley on Specht modules of staircase shape. 
\end{abstract}

\maketitle

\section{Introduction} \label{intro}

The Specht modules $S^\lambda$, where $\lambda$ is a partition of $n$, give a complete set of  irreducible 
representations of the symmetric group $\sg_n$ over a field  of characteristic $0$, say $\mathbb C $.  They can be constructed as  
subspaces of the regular representation $\mathbb C \sg_n$  or  as presentations given in terms of generators and relations, 
known as Garnir relations.   This paper deals primarily with the latter type of construction.

Let $\lambda= (\lambda_1 \geq\dots \geq \lambda_l)$ be a partition of $n$.  A {\em Young tableaux} of shape $\lambda$ is a filling of the Young diagram of shape $\lambda$ with distinct entries from the set $[n]:=\{1,2,\dots,n\}$.  Let $\mathcal T_\lambda$ be the set of  Young tableaux of shape
$\lambda$.

 To construct the Specht module as a submodule of the regular representation, one can use  Young symmetrizers.  For $t \in \mathcal T_\lambda$, the {\em Young symmetrizer} is defined by 
 \begin{equation} \label{symmeq} e_t:=\sum_{\alpha \in R_{t}}  \alpha  \\  \sum_{\beta \in C_t} \sgn(\beta) \beta ,\end{equation} 
where $C_t$ is the column stabilizer of $t$ and $R_t$ is the row stabilizer.
The {\em Specht module} $S^{\lambda}$ is the submodule of the regular representation  $\C \sg_{n}$ spanned by $\{\tau e_t : \tau \in \sg_{n} \} $.

To construct the Specht module as a presentation, one can use column tabloids and Garnir relations. Let $M^\lambda$ be the vector space (over $\C$) generated by  
$\mathcal T_\lambda$ subject only to column relations, which are of
the form $t+s$, where $s\in \mathcal T_\lambda$ is obtained from $t\in \mathcal T_\lambda$ by switching two
entries in the same column.  
Given $t \in \mathcal T_\lambda$, let $\bar t$ denote the coset of $t$
in $M^\lambda$. These cosets, which are called {\it column tabloids}, generate $M^{\lambda}$.
A Young tableau is {\em column strict} if the entries of each of its columns increase from top to bottom.  Clearly,
$\{\bar t  :  t \mbox{ is a column strict Young tableau of shape } \lambda\}$ is a basis for $M^\lambda$.

\vspace{.1in}\noindent {\bf Example:}
 $$ \ytableausetup
{mathmode, boxsize=1em}
\overline{\begin{ytableau}
\scriptstyle 3 & \scriptstyle 5  \\
 \scriptstyle 1 &  \scriptstyle 4  \\
 \scriptstyle 2 \\
\end{ytableau}}
\,\, = \,\, - \,
\ytableausetup
{mathmode, boxsize=1em}
\overline{\begin{ytableau}
\scriptstyle 1 & \scriptstyle 5  \\
 \scriptstyle 3 &  \scriptstyle 4  \\
 \scriptstyle 2 \\
\end{ytableau}}
\,\, = \,\, 
\ytableausetup
{mathmode, boxsize=1em}
\overline{\begin{ytableau}
\scriptstyle 1 & \scriptstyle 5  \\
 \scriptstyle 2 &  \scriptstyle 4  \\
 \scriptstyle 3 \\
\end{ytableau}}
\,\, = \,\, - \,
\ytableausetup
{mathmode, boxsize=1em}
\overline{\begin{ytableau}
\scriptstyle 1 & \scriptstyle 4  \\
 \scriptstyle 2 &  \scriptstyle 5  \\
 \scriptstyle 3 \\
\end{ytableau}}\,\,,$$
\vspace{.1in}

There are various different presentations of the Specht module $S^\lambda$ in the literature that involve  column tabloids and Garnir relations, see e.g. \cite{Fu, Sa}. Here we are interested in a presentation of $S^\lambda$ discussed in Fulton \cite[Section 7.4]{Fu}.  The generators are the column tabloids $\bar t$, where $t \in \mathcal T_\lambda$.
The Garnir relations are of the form
$\bar t-\sum \bar s$, where the sum is over all $s\in \mathcal
T_\lambda$ obtained from $t\in \mathcal T_\lambda$ by exchanging any
$k$ entries of a fixed  column  with the top $k$ entries of the next
column, while maintaining the vertical order of each of the exchanged
sets.  There is a Garnir relation $g^t_{c,k}$ for every $t \in
\mathcal T_\lambda$, every column $c \in [\lambda_1-1]$,  and  every
$k$ from $1$ to the length $l_{c+1}$ of  column $c+1$.  

\vspace{.1in}\noindent {\bf Example:} For
$$t= \ytableausetup
{mathmode, boxsize=1em}
\begin{ytableau}
\scriptstyle 1 & \scriptstyle 5 & \scriptstyle 7 \\
 \scriptstyle 2 &  \scriptstyle 6  \\
 \scriptstyle 3 \\
\scriptstyle 4
\end{ytableau}\,\,,$$

we have
$$ g^t_{1,1} =
\ytableausetup
{mathmode, boxsize=1em}
\overline{\begin{ytableau}
\scriptstyle 1 & \scriptstyle 5 & \scriptstyle 7 \\
 \scriptstyle 2 &  \scriptstyle 6  \\
 \scriptstyle 3 \\
\scriptstyle 4
\end{ytableau}}
\,\,- \,\,
\ytableausetup
{mathmode, boxsize=1em}
\overline{\begin{ytableau}
\scriptstyle 5 & \scriptstyle 1  & \scriptstyle 7 \\
 \scriptstyle 2 &  \scriptstyle 6  \\
 \scriptstyle 3 \\
\scriptstyle 4
\end{ytableau}}
\,\,- \,\,
\ytableausetup
{mathmode, boxsize=1em}
\overline{\begin{ytableau}
\scriptstyle 1 & \scriptstyle 2  & \scriptstyle 7\\
 \scriptstyle 5 &  \scriptstyle 6  \\
 \scriptstyle 3 \\
\scriptstyle 4
\end{ytableau}}
\,\,- \,\,
\ytableausetup
{mathmode, boxsize=1em}
\overline{\begin{ytableau}
\scriptstyle 1 & \scriptstyle 3 & \scriptstyle 7 \\
 \scriptstyle 2 &  \scriptstyle 6  \\
 \scriptstyle 5 \\
\scriptstyle 4
\end{ytableau}}
\,\,- \,\,
\ytableausetup
{mathmode, boxsize=1em}
\overline{\begin{ytableau}
\scriptstyle 1 & \scriptstyle 4 & \scriptstyle 7 \\
 \scriptstyle 2 &  \scriptstyle 6  \\
 \scriptstyle 3 \\
\scriptstyle 5
\end{ytableau}}
$$

and
$$ g^t_{1,2} =
\ytableausetup
{mathmode, boxsize=1em}
\overline{\begin{ytableau}
\scriptstyle 1 & \scriptstyle 5 & \scriptstyle 7 \\
 \scriptstyle 2 &  \scriptstyle 6  \\
 \scriptstyle 3 \\
\scriptstyle 4
\end{ytableau}}
\,\,- \,\,
\ytableausetup
{mathmode, boxsize=1em}
\overline{\begin{ytableau}
\scriptstyle 5 & \scriptstyle 1 & \scriptstyle 7 \\
 \scriptstyle 6 &  \scriptstyle 2  \\
 \scriptstyle 3 \\
\scriptstyle 4
\end{ytableau}}
\,\,- \,\,
\ytableausetup
{mathmode, boxsize=1em}
\overline{\begin{ytableau}
\scriptstyle 5 & \scriptstyle 1 & \scriptstyle 7\\
 \scriptstyle 2 &  \scriptstyle 3  \\
 \scriptstyle 6 \\
\scriptstyle 4
\end{ytableau}}
\,\,- \,\,
\ytableausetup
{mathmode, boxsize=1em}
\overline{\begin{ytableau}
\scriptstyle 5 & \scriptstyle 1 & \scriptstyle 7\\
 \scriptstyle 2 &  \scriptstyle 4  \\
 \scriptstyle 3 \\
\scriptstyle 6
\end{ytableau}}
\,\,- \,\,
\ytableausetup
{mathmode, boxsize=1em}
\overline{\begin{ytableau}
\scriptstyle 1 & \scriptstyle 2  & \scriptstyle 7\\
 \scriptstyle 5 &  \scriptstyle 3  \\
 \scriptstyle 6 \\
\scriptstyle 4
\end{ytableau}}
\,\,- \,\,
\ytableausetup
{mathmode, boxsize=1em}
\overline{\begin{ytableau}
\scriptstyle 1 & \scriptstyle 2  & \scriptstyle 7\\
 \scriptstyle 5 &  \scriptstyle 4  \\
 \scriptstyle 3 \\
\scriptstyle 6
\end{ytableau}}
\,\,- \,\,
\ytableausetup
{mathmode, boxsize=1em}
\overline{\begin{ytableau}
\scriptstyle 1 & \scriptstyle 3  & \scriptstyle 7\\
 \scriptstyle 2 &  \scriptstyle 4  \\
 \scriptstyle 5 \\
\scriptstyle 6
\end{ytableau}}\, \, .
$$

\vspace{.1in} Let $G^{\lambda}$ be the subspace of $M^\lambda$ generated by the Garnir relations in
 \begin{equation} \label{garnireq} \{ g^t_{c,k} : c \in [\lambda_1-1], k \in [l_{c+1}], t \in \mathcal T_\lambda \}  .\end{equation}  
The symmetric group $\sg_n$ acts on $\mathcal T_\lambda$ by replacing
 each entry of a tableau by its image under the permutation in $\sg_n$.
 This induces a representation of $\sg_n$ on $M^\lambda$. Clearly $G^\lambda$ is invariant under the action of
$\sg_n$. 
The  presentation of $S^\lambda$ obtained in Section 7.4 of \cite{Fu} is given by
   \be \label{Fultoneq} M^\lambda/G^\lambda \cong_{\sg_n} S^\lambda. \ee

 On page 102 (before Ex.~16) of \cite{Fu}, a presentation of $S^\lambda$ with a smaller set of relations is given.  In this presentation, the index $k$ in $g^t_{c,k}$ of (\ref{garnireq}) is restricted to a single value:  $k=\min [l_{c+1}]= 1$. More precisely, the presentation is
\be \label{1fulton} M^{\lambda} / G^{\lambda,\min} \cong_{\sg_n} S^{\lambda} ,\ee where
 $G^{\lambda,\min}$ is the subspace of $G^\lambda$ generated by the subset of Garnir relations, $$\{ g^t_{c,1} : c \in [\lambda_1-1], t \in \mathcal T_\lambda \}  .$$ 
 
 In this paper we obtain an analogous presentation of $S^\lambda$, also with a smaller set of relations, when the conjugate $\lambda^*$ has distinct parts. The index $k$ in $g^t_{c,k}$ of (\ref{garnireq}) is again restricted to a single value, but now that value is the maximum value:  $k=\max [l_{c+1}]= l_{c+1}$.
The presentation 
is given in our main result:
\begin{theorem} \label{mainth} Let $\lambda$ be a partition whose conjugate has distinct parts. Then as $\sg_n$-modules,
\begin{equation} \label{maineq} M^{\lambda} / G^{\lambda,\max} \cong S^{\lambda} ,\end{equation} where
 $G^{\lambda,\max}$ is the subspace of $G^\lambda$ generated by $$\{ g^t_{c,l_{c+1}} : c \in [\lambda_1-1], t \in \mathcal T_\lambda \}  .$$
Moreover, we can further reduce this set of relations by restricting $t$ to the set of column strict tableaux.
\end{theorem}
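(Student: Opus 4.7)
The plan is a dimension count. Since $G^{\lambda,\max} \subseteq G^\lambda$ by definition, Fulton's presentation (\ref{Fultoneq}) yields a canonical $\sg_n$-equivariant surjection $M^\lambda/G^{\lambda,\max} \twoheadrightarrow S^\lambda$, so $\dim M^\lambda/G^{\lambda,\max} \geq f^\lambda := \dim S^\lambda$. The substantive task is to prove the reverse inequality by exhibiting a spanning set of the correct size; the natural candidate is the image of the standard Young tableaux, which number exactly $f^\lambda$.

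To show that $\{\bar t + G^{\lambda,\max} : t \text{ standard}\}$ spans $M^\lambda/G^{\lambda,\max}$, I would establish a straightening algorithm: every column strict column tabloid is congruent modulo $G^{\lambda,\max}$ to a $\C$-linear combination of standard ones. Equip column strict tableaux with a well-founded order, such as lexicographic order on the row-reading word. Given a non-standard column strict $t$ with a row descent at position $(i,c)$, consider the Garnir element $g^s_{c, l_{c+1}} \in G^{\lambda,\max}$ for a carefully chosen $s$: one wants $s$ such that, after column relations are applied to each summand, $\bar t$ appears as one of the terms with nonzero coefficient while every other term is strictly smaller in the chosen order. Iterating the resulting rewrite rule terminates at a linear combination of standard tabloids.

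The principal obstacle will be to make this straightening actually work with $k = l_{c+1}$. In the usual Fulton straightening one uses a small $k$ targeted directly at the descent, but here the relation exchanges \emph{all} $l_{c+1}$ entries of column $c+1$ at once, a priori introducing many new row descents. The hypothesis that $\lambda^*$ has distinct parts --- equivalently $l_c > l_{c+1}$ for every $c$ --- is precisely what creates the slack needed: strictly more entries of column $c$ are available than are mandated to participate in the exchange, and the $\binom{l_c}{l_{c+1}}$-fold freedom in selecting them can be exploited so that every new term is lex-smaller than $t$. The linear operator on $M^\lambda$ advertised in the abstract is, I expect, the device that formalizes this construction --- plausibly a partial anti-symmetrization packaging these maximal exchanges, whose kernel contains $G^{\lambda,\max}$ and whose image is manifestly spanned by the $f^\lambda$ standard tabloids.

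The final clause, restricting $t$ to column strict tableaux in the generating set for $G^{\lambda,\max}$, is essentially formal once one exploits the column relations already built into $M^\lambda$. Given $t \in \mathcal T_\lambda$ arbitrary, let $\sigma$ be the column permutation taking $t$ to the unique column strict $t'$ with the same columns as sets; one verifies that the same $\sigma$ acts on each summand of $g^t_{c,l_{c+1}}$ to give $g^t_{c,l_{c+1}} = \sgn(\sigma)\, g^{t'}_{c,l_{c+1}}$ in $M^\lambda$, so the two generating sets span the same subspace.
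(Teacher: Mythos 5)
Your framing observations are sound---the canonical surjection $M^\lambda/G^{\lambda,\max}\twoheadrightarrow S^\lambda$ coming from Fulton's presentation (\ref{Fultoneq}), and the reduction of the generating set to column strict $t$ via column relations---but the heart of the argument, namely that the standard column tabloids span $M^\lambda/G^{\lambda,\max}$, is precisely the step you leave as a hope rather than a proof. You correctly identify that a straightening with $k=l_{c+1}$ exchanges all of column $c+1$ at once and ``a priori introduces many new row descents,'' and then assert, with no candidate tableau $s$, no well-founded order verified to decrease, and no computation of coefficients, that the $\binom{l_c}{l_{c+1}}$-fold freedom ``can be exploited.'' Nothing in the proposal establishes that for a given non-standard $\bar t$ there is an element of $G^{\lambda,\max}$ in which $\bar t$ occurs with nonzero coefficient while every other term (after normalizing by column relations) is strictly smaller, nor that such a rewriting terminates. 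That is the entire content of the theorem, and it is missing.

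The paper's route is genuinely different and avoids straightening altogether. It first reduces to two-column shapes $2^m1^{n-m}$ via (\ref{Fultoneq}), where $V_{n,m}=M^{2^m1^{n-m}}$ is multiplicity-free, $V_{n,m}\cong\bigoplus_{i=0}^{m}S^{2^i1^{n+m-2i}}$. The advertised linear operator is $\varphi(v_S)=g_S$, an $\sg_{n+m}$-equivariant map whose \emph{image} is the span $G_{n,m}$ of the maximal Garnir relations (not, as you guessed, an antisymmetrizer whose kernel contains them), so that $V_{n,m}/G_{n,m}\cong\ker\varphi$ by Schur's lemma. The proof then computes the scalar $w_i=1-\binom{n-i}{m-i}(-1)^{m-i}$ by which $\varphi$ acts on each irreducible summand, using an explicit pairing with Young symmetrizers, and concludes that $\ker\varphi\cong S^{2^m1^{n-m}}$ exactly when $w_i=0$ only for $i=m$, i.e.\ when $m<n$ (or $m=n=1$). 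This eigenvalue computation is where the hypothesis $l_c>l_{c+1}$ actually enters, and it also yields the converse: for $m=n\ge 2$ the eigenvalue $0$ occurs for every $i$ with $n-i$ even and the presentation fails (Theorem~\ref{mainth2}). Any straightening mechanism based only on qualitative ``slack'' would need a quantitative reason to break exactly at $l_c=l_{c+1}$; your sketch supplies none, so the proposal cannot be counted as a proof.
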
 

Theorem~\ref{mainth}  is obtained as a consequence of Corollary~\ref{eigencor1} below, which gives all eigenspaces of a certain linear operator on $M^{\lambda}$, where $\lambda$ is any two-column shape $ 2^a1^{b}$.  This eigenspace result improves   an earlier result of the authors and Stanley \cite{FHSW2} (see also \cite{FHSW1}), in the special case that $b=1$. The earlier result  was presented in the setting of an $n$-ary generalization of Lie algebra called a LAnKe or 
Filippov algebra, where only the $b=1$ case was relevant.  An observation in   \cite{FHSW2} that the $n$-ary Jacobi relations correspond to the restricted class of Garnir relations $G^{\lambda,\max}$  for  $\lambda=2^{n-1}1^{1}$  provided a special case of Theorem \ref{mainth} for staircase shapes and motivated the  work in the current paper. Our proof in this paper of the eigenspace result corrects an error in the proof for the special case given in \cite{FHSW2}.

The method of introducing a linear operator on the space of column tabloids was subsequently used in  \cite{BF}  to obtain a different presentation of $S^\lambda$ with a reduced number of relations. This presentation works for {\it all} 
shapes, but rather than using a subset of the Garnir relations,  it uses a set consisting of sums of  the Garnir relations that 
generate $G^{\lambda, \min}$.

In the next section we prove  our eigenspace result and  use it to prove Theorem~\ref{mainth}.  Actually, we obtain a slightly more general version of  Theorem~\ref{mainth}  along with a converse.

\section{The proof of Theorem~\ref{mainth}: reduction to the two-column case}

Theorem~\ref{mainth} holds more generally when we allow the conjugate $\lambda^*$ to have multiple parts equal to $1$, while still requiring the parts greater than $1$ to be distinct.   Note that a partition $\lambda=(\lambda_1\ge \lambda_2 \ge  \dots\ge  \lambda_l)$ meets this requirement if  and only if $\lambda_i -\lambda_{i+1} \le 1$ for all $i=2,\dots l-1$ and $\lambda_{l} =1$. For example,  $\lambda = (5,4,2,1,1,1)^*= (6,3,2,2,1)$ meets this requirement, but  $\lambda = (4,3,2,2)^*= (4,4,2,1)$ does not.  The following result implies Theorem~\ref{mainth}.  

\begin{theorem} \label{mainth2} Let $\lambda = (\lambda_1 \ge \lambda_2 \ge \cdots \ge \lambda_\ell)$ be a partition of $n$.  If  $\lambda_i -\lambda_{i+1} \le 1$ for each $i = 2,\dots,l-1$ and $\lambda_l=1$ then  as $\sg_n$-modules,
\begin{equation} \label{maineq} M^{\lambda} / G^{\lambda,\max} \cong S^{\lambda} ,\end{equation} where
 $G^{\lambda,\max}$ is the subspace of $G^\lambda$ generated by $$\{ g^t_{c,l_{c+1}} : c \in [\lambda_1-1], t \in \mathcal T_\lambda \}  .$$
Moreover, we can further reduce this set of relations by restricting $t$ to the set of column strict tableaux.  Conversely, if (\ref{maineq}) holds then $\lambda_i -\lambda_{i+1} \le 1$ for each $i = 2,\dots,l-1$ and $\lambda_l=1$.
\end{theorem}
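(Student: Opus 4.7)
The plan is to derive Theorem~\ref{mainth2} from the two-column eigenspace result (Corollary~\ref{eigencor1}) via a column-local reduction. Since $G^{\lambda,\max}\subseteq G^\lambda$, the quotient map automatically produces a surjection $M^\lambda/G^{\lambda,\max}\twoheadrightarrow M^\lambda/G^\lambda\cong S^\lambda$, so the forward direction of the theorem amounts to showing the reverse inclusion $G^\lambda\subseteq G^{\lambda,\max}$ inside $M^\lambda$; equivalently, every Garnir relation $g^t_{c,k}$ with $k<l_{c+1}$ must already lie in $G^{\lambda,\max}$.

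To accomplish this, I would first reduce to a two-column problem. Since $g^t_{c,k}$ only modifies columns $c$ and $c+1$ of $t$, for any fixed filling of the remaining columns one is working inside the subspace $V_t\subseteq M^\lambda$ spanned by the column tabloids that agree with $t$ outside columns $c$ and $c+1$. This $V_t$ is naturally isomorphic to $M^\mu$, where $\mu=2^{l_{c+1}}1^{l_c-l_{c+1}}$ is the two-column shape cut out by columns $c,c+1$; under the identification, $g^t_{c,k}$ matches a Garnir relation of $\mu$ and $g^t_{c,l_{c+1}}$ matches the max one. The hypothesis on $\lambda$ guarantees that whenever $l_{c+1}\ge 2$ one has $l_c>l_{c+1}$ (the parts $>1$ of $\lambda^*$ are distinct), so $\mu=2^a1^b$ with $b\ge 1$; when $l_{c+1}=1$, the index $k$ is forced to equal $1=l_{c+1}$ and there is nothing to prove.

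Next I would apply Corollary~\ref{eigencor1} to $M^\mu$ with $\mu=2^a1^b$, $b\ge 1$: its eigenspace description should show that the linear span of the Garnir relations $g^t_{1,k}$, $k\in[a]$, coincides with the single eigenspace containing $g^t_{1,a}$, yielding $G^{\mu,\max}=G^\mu$ in $M^\mu$. Pulling back through $V_t\cong M^\mu$ gives $g^t_{c,k}\in G^{\lambda,\max}$ for every $k$, completing the forward direction. The restriction to column-strict $t$ is routine: a direct check shows that for $\pi\in C_t$ the relation $g^{\pi(t)}_{c,l_{c+1}}$ equals $g^t_{c,l_{c+1}}$ up to a sign determined by $\pi$, so the linear spans of $\{g^t_{c,l_{c+1}}\}_{t\in\mathcal T_\lambda}$ and $\{g^t_{c,l_{c+1}}\}_{t\text{ column strict}}$ coincide.

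For the converse, if the hypothesis fails then there are adjacent columns $c,c+1$ of $\lambda$ with $l_c=l_{c+1}=a\ge 2$: either $\lambda_i-\lambda_{i+1}\ge 2$ for some $i\ge 2$ forces two columns of common length $i\ge 2$, or $\lambda_\ell\ge 2$ forces $l_1=l_2=\ell$. The local shape is then $\mu=2^a$, i.e.\ $b=0$, a case in which Corollary~\ref{eigencor1} exhibits an eigenspace strictly larger than $G^{\mu,\max}$, so $\dim(M^\mu/G^{\mu,\max})>\dim S^\mu$; transporting this deficit back through $V_t$ then shows that $M^\lambda/G^{\lambda,\max}$ is strictly larger than $S^\lambda$. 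The principal obstacle I foresee is in step one: the identification $V_t\cong M^\mu$ has to be set up carefully so that the sign conventions on column tabloids are matched, and in the converse direction one must verify that the two-column dimension gap is not absorbed by Garnir relations coming from other pairs of adjacent columns.
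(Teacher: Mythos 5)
Your proposal follows essentially the same route as the paper: reduce the problem to the two-column shapes $2^{l_{c+1}}1^{l_c-l_{c+1}}$ cut out by adjacent column pairs, and then invoke the eigenspace results for the operator $\varphi$ to decide when the maximal Garnir relations already span all Garnir relations. Two small repairs: the equal-column-length case $l_c=l_{c+1}$ needed for the converse is governed by Corollary~\ref{eigencor2} (the $m=n$ case), not Corollary~\ref{eigencor1}, and the ``absorption'' worry you flag at the end is exactly what the paper dispatches by noting that $G^\lambda$ and $G^{\lambda,\max}$ each decompose as a \emph{direct} sum over adjacent column pairs, so a strict deficit at one pair cannot be compensated by relations coming from another.
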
 

It  follows from Fulton's presentation given in (\ref{Fultoneq}) that we need only  prove Theorem~\ref{mainth2} for shapes  with   just two columns.  Indeed, let $\lambda$ be a partition of $n$. By (\ref{Fultoneq}) $M^\lambda / G^{\lambda, \max} = S^\lambda$ if and only if $G^{\lambda, \max} = G^\lambda$.  
Since $G^{\lambda, \max}$ equals the direct sum over all columns $ c$ of the subspaces spanned by  $\{g^t_{c,l_{c+1}}:  t \in \mathcal T_\lambda\}$, and $G^{\lambda}$ equals the direct sum over all columns $ c$ of the subspaces spanned by  the larger set $\{g^t_{c,k}: k \in [l_{c+1}], t \in \mathcal T_\lambda\}$, we have that $G^{\lambda, \max} = G^\lambda$ if and only if 
\begin{equation} \label{twocoleq} \langle g^t_{c,l_{c+1}}:  t \in \mathcal T_\lambda\rangle= \langle g^t_{c,k}: k \in [l_{c+1}], t \in \mathcal T_\lambda\rangle\end{equation} for all columns $c$.
By applying (\ref{Fultoneq})  to the  conjugate of $(l_c,l_{c+1})$ we have that (\ref{twocoleq}) holds if and only if 
$$M^{(l_c,l_{c+1})^*} / G^{(l_c,l_{c+1})^*,\max} = M^{(l_c,l_{c+1})^*} / \langle g^t_{c,l_{c+1}}:  t \in \mathcal T_{(l_c, l_{c+1} )^*}\rangle = S^{(l_c,l_{c+1})^*}.$$
Putting all this together yields $M^\lambda / G^{\lambda, \max} = S^\lambda$ if and only if $$M^{(l_c,l_{c+1})^*} / G^{(l_c,l_{c+1})^*,\max} = S^{(l_c,l_{c+1})^*}$$ for all columns $c$ of $\lambda$.  Hence our claim that we  need only consider  two-column shapes holds.

For the remainder of this section we consider only two-column shapes $2^m1^{n-m}$.
For $n \ge m$, let $V_{n,m} := M^{2^m1^{n-m}}$. For each $n$-element subset $S$ of $[n+m]$,  let $t_S$ be  the column strict Young tableau of shape $2^m1^{n-m}$ whose first column consists of the elements of $S$ and   second column consists of the elements of $[n+m] \setminus S $, and write  $v_S$ for the column  tabloid  $ \bar t_S$ indexed by $S$.  
Clearly, 
\begin{equation} \label{basiseq} \left \{ v_S : S \in \binom{[n+m]}{n}\right \}\end{equation} 
is a basis for $V_{n,m}$.  Thus $V_{n,m}$ has dimension $\binom{n+m}{n}$.

For each $S \in \binom{[n+m]}{n}$,  let $g_S$ denote the Garnir relation $g^{t_S}_{1,m}$, that is
$$g_S :=  v_S \, -\sum \bar t ,$$ where the sum is over all tableaux
$t$ obtained from $t_S$ by  choosing an $m$-subset $S^\prime$ of $S$ and for each $i=1,2,\dots,m$, exchanging the $i$th smallest  element of $S^\prime$  (which is in column 1 of $t_S$) with the $i$th element of column 2 of $t_S$.   
 For example, if $n=3$ and $m=2$  then   

\begin{align*}g_{\{2,4,5\}} &=
\ytableausetup
{mathmode, boxsize=1em}
\overline{\begin{ytableau}
\scriptstyle 2 & \scriptstyle 1  \\
 \scriptstyle 4 &  \scriptstyle 3  \\
 \scriptstyle 5
\end{ytableau}}
\,\,- \,\,
\overline{\begin{ytableau}
\scriptstyle 1 & \scriptstyle 2  \\
 \scriptstyle 3 &  \scriptstyle 4  \\
 \scriptstyle 5
\end{ytableau}}
\,\, - \,\,
\overline{\begin{ytableau}
\scriptstyle 1 & \scriptstyle 2  \\
 \scriptstyle 4 &  \scriptstyle 5  \\
 \scriptstyle 3
\end{ytableau}}
\,\, - \,\,
\overline{\begin{ytableau}
\scriptstyle 2 & \scriptstyle 4  \\
 \scriptstyle 1 &  \scriptstyle 5  \\
 \scriptstyle 3
\end{ytableau}} \,\,.
\\
&= v_{\{2,4,5\} }-v_{\{1,3,5\}} +v_{\{1,3,4\}} +v_{\{1,2,3\}}.
\end{align*}

\begin{lemma} \label{coefth} For all $v \in V_{n,m}$ and $S \in \binom{[n+m]}n$, let $\langle v,v_S \rangle$ denote the coefficient of $v_S$ in the expansion of $v$ in the basis given in (\ref{basiseq}).  Also let $p:= n-m$. Then for all $S,T \in \binom{[n+m]}{n}$,
$$\langle g_S,v_T \rangle = \left\{
  \begin{array}{l l}

1& \quad  \text{if } S=T  \\
\\

    (-1)^{d_1+ \dots + d_{p} +\binom{p +1}2+1}&  \quad  \text{if } S\cap T = \{d_1,\dots, d_{p}\}\\
\\ 
    0 & \quad \text{if } S\neq T \text{ but } |S\cap T| >p . \
  \end{array} \right.$$
\end{lemma}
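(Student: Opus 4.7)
The plan is to expand $g_S = v_S - \sum \bar t$ in the basis $\{v_U\}$ by matching each summand $\bar t$ to a subset $T$ and computing the sign $\epsilon(S,T)$ defined by $\bar t = \epsilon(S,T) v_T$. The three cases of the lemma then fall out after a sign calculation.

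First I would set up the bijection between summands of the Garnir sum and sets $T$ with $|S \cap T| = p$. A choice of $m$-subset $S' \subseteq S$ produces a tableau $t$ whose column 1 is $T := (S \setminus S') \cup T_0$ as a set (with $T_0 := [n+m] \setminus S$) and whose column 2 is $S'$ in increasing order. Thus every summand satisfies $|S \cap T| = |S \setminus S'| = p$, while conversely $|S \cap T| \ge p$ always (since $|S \cup T| \le n+m$), with equality forcing $S \cup T = [n+m]$ and hence $S' = S \setminus T$. This immediately handles two of the three cases: when $S = T$ (so $|S\cap T| = n > p$ for $m\ge 1$) only the term $v_S$ contributes, giving coefficient $1$; when $|S \cap T| > p$ with $T \ne S$ no summand contributes, giving $0$. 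What remains is $|S \cap T| = p$, where the coefficient is $-\epsilon(S,T)$.

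For the sign I would use the following reading-permutation device. For $U \in \binom{[n+m]}{n}$, let $\tilde\phi_U \in \sg_{n+m}$ be the permutation reading $t_U$ column-by-column, top to bottom; a short inversion count (only the cross-part inversions survive) gives
\[
\sgn(\tilde\phi_U) = (-1)^{\sigma(U) + \binom{n+1}{2}}, \qquad \sigma(U) := \sum_{x \in U} x.
\]
Since $t$ and $t_T$ have the same column sets and in fact the same column 2 in the same order, the permutation $\tilde\phi_T^{-1}\tilde\phi_t$ permutes only within column 1. Hence $\bar t = \sgn(\tilde\phi_T^{-1}\tilde\phi_t)\, v_T$, so $\epsilon(S,T) = \sgn(\tilde\phi_S)\sgn(\tilde\phi_T)(-1)^m$: the factor $(-1)^m$ arises because $\tilde\phi_t$ is obtained from $\tilde\phi_S$ by the $m$ disjoint transpositions that swap each $s_{i_\ell}$ (at position $i_\ell$ in column 1) with $r_\ell$ (at position $n+\ell$ in column 2).

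Plugging in, the $\binom{n+1}{2}$ contributions cancel in pairs and $\epsilon(S,T) = (-1)^{m + \sigma(S) + \sigma(T)}$. Because $S \cup T = [n+m]$ in this case, inclusion--exclusion gives $\sigma(S) + \sigma(T) = \sigma(S \cap T) + \binom{n+m+1}{2}$, so the lemma reduces to the parity identity $m + \binom{n+m+1}{2} \equiv \binom{p+1}{2} \pmod 2$, which is immediate from $\binom{n+m+1}{2} - \binom{p+1}{2} = m(2n+1)$. The conclusion $\langle g_S, v_T\rangle = -\epsilon(S,T) = (-1)^{d_1 + \cdots + d_p + \binom{p+1}{2} + 1}$ then follows. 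The main obstacle I anticipate is precisely this sign bookkeeping: a direct inversion count inside the shuffled column 1 of $t$ is cumbersome and error-prone, but the reading-permutation reduction collapses everything to the three global signs $\sgn(\tilde\phi_S)$, $\sgn(\tilde\phi_T)$, $(-1)^m$ together with the short parity identity above.
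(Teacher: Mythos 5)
Your proof is correct, and while its overall skeleton matches the paper's (set up the bijection $S' \mapsto T$ between Garnir summands and subsets $T$ with $|S\cap T|=p$, dispose of the cases $S=T$ and $|S\cap T|>p$ immediately, then compute a sign for the remaining case), the sign computation itself takes a genuinely different route. The paper works inside the tableau $t$: it writes out the first column explicitly as an interleaving of the surviving elements $a_{i_1},\dots,a_{i_p}$ of $S$ among the elements $b_1,\dots,b_m$ of $[n+m]\setminus S$, moves the $a_{i_k}$'s to the top at a cost of $(-1)^{i_1+\cdots+i_p-\binom{p+1}{2}}$, then sorts at a further cost of $(-1)^{j_1+\cdots+j_p}$ with $j_k$ the number of $b$'s below $a_{i_k}$, and finishes with the identity $i_k+j_k=a_{i_k}$. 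You instead factor the sign through the reading permutations $\tilde\phi_U$ of the \emph{sorted} tableaux, whose signs you compute once and for all as $(-1)^{\sigma(U)+\binom{n+1}{2}}$, observe that the Garnir exchange is a product of $m$ disjoint transpositions contributing $(-1)^m$, and then collapse $\sigma(S)+\sigma(T)$ to $\sigma(S\cap T)+\binom{n+m+1}{2}$ via $S\cup T=[n+m]$, leaving only the parity identity $m+\binom{n+m+1}{2}\equiv\binom{p+1}{2}\pmod 2$. Your version trades the paper's local inversion bookkeeping for three global signs and one binomial parity check; it is less index-heavy and arguably more robust against sign errors, while the paper's is more self-contained in that it never leaves the tableau. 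I verified your parity identity ($\binom{n+m+1}{2}-\binom{p+1}{2}=m(2n+1)$) and checked your formula against the paper's worked example $g_{\{2,4,5\}}$ with $n=3$, $m=2$; everything agrees. The only cosmetic point is that both your argument for the case $S=T$ and the trichotomy of cases implicitly assume $m\ge 1$ (for $m=0$ the cases $S=T$ and $|S\cap T|=p$ coincide), but the paper makes the same tacit assumption.
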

\begin{proof} It is easy to see that  the only possible cases are: $S=T$, $|S\cap T| = p$  and $|S\cap T|  > p$. 
 It is straightforward that the  result for the first and third cases holds.  For the second case, let $S = \{a_1,a_2,\dots,a_n \}$ 
 and $[n+m] \setminus S = \{b_1,b_2,\dots,b_m \}$, where the $a$'s and $b$'s are listed in increasing order. 
 Then the terms of $g_S$  other than $v_S$ are of the form $-\bar t$, where $t$ is a tableau whose   first column listed from top 
 to bottom is  $$b_1, \dots, b_{i_1-1}, a_{i_1}, b_{i_1}, \dots, b_{i_2-2}, a_{i_2}, b_{i_2-1},\dots,
 b_{i_p-p}, a_{i_p}, b_{i_p-p+1}, \dots ,b_m $$ with  $1\le i_1<i_2<\dots<i_p\le n$, and whose second column is $[n+m] \setminus T$ listed in increasing order,
  where $T=\{  a_{i_1}, \dots, a_{i_p}, b_1, \dots, b_m\}$. 
 Clearly $\bar t$ is equal to $(-1)^{i_1+\dots+i_p-\binom {p+1}2}$ times the tabloid   whose first column from top to bottom is 
 $$ a_{i_1}, \dots, a_{i_p}, b_1, \dots, b_m$$ and whose second column is $[n+m] \setminus T$ listed in increasing order.  This tabloid is equal to 
 $(-1)^{j_1+j_2+\dots+j_p} v_T$, where  each  $j_k$ is the number of $b$'s  that 
 are less than $a_{i_k}$.   Since 
 $i_k + j_k= a_{i_k}$ for each $k$,  we have
 $$\langle g_S, v_T\rangle = (-1)^{a_{i_1} + a_{i_2} + \dots +a_{i_p}-\binom{p+1}2 +1}.$$ Now note that  $\{a_{i_1}, a_{i_2}, \dots, a_{i_p}\} = S \cap T$.
\end{proof}

Let $G_{n,m}$ be the subspace of $V_{n,m}$ generated by $\{g_S : S \in \binom{[n+m]} n\}$.  It is not difficult to see that $G_{n,m}$ is invariant under the action of $\sg_{n+m}$ on $V_{n,m}$. The two-column case of  Theorem~\ref{mainth2} can now be stated as follows.
\begin{theorem} \label{twocolth} Let $m \le n$. Then  as $\sg_{n+m}$-modules,
$$V_{n,m}/G_{n,m} \cong S^{2^m1^{n-m}}$$ if and only if $m <n$ or $m=n=1$.
\end{theorem}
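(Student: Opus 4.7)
The plan is to derive Theorem~\ref{twocolth} directly from the eigenspace description that Corollary~\ref{eigencor1} provides. First, introduce the linear operator $\Phi:V_{n,m}\to V_{n,m}$ defined on the basis (\ref{basiseq}) by $\Phi(v_S)=g_S$. By construction $G_{n,m}=\Phi(V_{n,m})$, so $V_{n,m}/G_{n,m}$ is the cokernel of $\Phi$. Since $\sg_{n+m}$ acts on each $g_S$ by the same signed rule it uses on $v_S$ (the Garnir construction being $\sg_{n+m}$-equivariant), $\Phi$ is itself $\sg_{n+m}$-equivariant.

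Next, identify $V_{n,m}$ with the induced representation $\sgn_{\sg_n\times\sg_m}\uparrow_{\sg_n\times\sg_m}^{\sg_{n+m}}$, and apply Pieri's rule (equivalently, expand $e_n\cdot e_m$ in the Schur basis) to obtain the multiplicity-free decomposition
$$V_{n,m}\;\cong\;\bigoplus_{j=0}^{m}S^{2^j 1^{n+m-2j}}.$$
Because each irreducible summand appears with multiplicity one, Schur's lemma forces $\Phi$ to act as a scalar $c_j$ on the $j$th summand. Semisimplicity of $\C\sg_{n+m}$ then gives $V_{n,m}=\ker\Phi\oplus\Phi(V_{n,m})$ as $\sg_{n+m}$-modules, whence
$$V_{n,m}/G_{n,m}\;\cong\;\ker\Phi\;\cong\;\bigoplus_{j:\,c_j=0}S^{2^j 1^{n+m-2j}}.$$
Since $S^\lambda=S^{2^m 1^{n-m}}$ is precisely the $j=m$ summand, Theorem~\ref{twocolth} reduces to showing that $\{j\in\{0,\dots,m\}:c_j=0\}=\{m\}$ if and only if $m<n$ or $m=n=1$.

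Now read the scalars $c_j$ off from Corollary~\ref{eigencor1}. The vanishing $c_m=0$ is automatic: Fulton's presentation (\ref{Fultoneq}) gives $M^\lambda/G^\lambda\cong S^\lambda$, and since $G_{n,m}\subseteq G^\lambda$ the $j=m$ copy of $S^\lambda$ must be annihilated by $\Phi$. For $0\le j<m$, the closed-form expression in Corollary~\ref{eigencor1} shows $c_j\ne 0$ whenever $m<n$, and the boundary case $m=n=1$ (with only $j=0<m$ to check) is handled directly by Lemma~\ref{coefth} with $p=0$, yielding $c_0=2\ne 0$. For the converse, when $m=n\ge 2$ we have $p=0$ and Lemma~\ref{coefth} specializes to $g_S=v_S-v_{[n+m]\setminus S}$; this immediately forces $c_0=0$ (the operator is visibly antisymmetric under complementation), producing a spurious $S^{1^{n+m}}$ summand in $V_{n,m}/G_{n,m}$ and precluding the isomorphism with $S^\lambda$.

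The main obstacle is certifying the non-vanishing of the intermediate eigenvalues $c_j$ for $0<j<m$ in the generic regime $m<n$: ruling out any accidental additional zero requires the full strength of Corollary~\ref{eigencor1}, whose proof rests on the sign structure of $\Phi$ encoded by Lemma~\ref{coefth} together with an explicit computation of $\Phi$ on a carefully chosen vector in each isotypic component.
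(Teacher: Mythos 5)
Your overall strategy is exactly the paper's: the operator $\varphi(v_S)=g_S$ is equivariant, the Pieri decomposition (\ref{freeeq}) is multiplicity-free, Schur's lemma makes $\varphi$ act by a scalar on each summand, semisimplicity identifies $V_{n,m}/G_{n,m}$ with $\ker\varphi$ (this is Lemma~\ref{decomplem}), and the ``if'' direction then follows from the eigenvalues in Theorem~\ref{eigens} and Corollary~\ref{eigencor1} together with the case $m=n=1$. All of that is sound, including the pleasant observation that $c_m=0$ can be seen a priori from Fulton's presentation (\ref{Fultoneq}) without computing anything.

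The genuine error is in your converse for $m=n\ge 2$. You claim that $g_S=v_S-v_{[2n]\setminus S}$ ``immediately forces $c_0=0$,'' i.e.\ that the copy of the sign representation $S^{1^{2n}}$ lies in $\ker\varphi$. This is false when $n$ is odd. Writing $\varphi=I-\tau$ with $\tau v_S=v_{[2n]\setminus S}$, the sign representation sits in the $(-1)^n$-eigenspace of $\tau$, not always in the $+1$-eigenspace: already for $n=1$ one has $\varphi(v_{\{1\}}-v_{\{2\}})=2(v_{\{1\}}-v_{\{2\}})$, and by Theorem~\ref{eigens} the eigenvalue on $S^{1^{2n}}$ is $w_0=1-(-1)^n$, which equals $2$ for every odd $n$. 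So for $n=m=3$, say, $S^{1^6}$ is \emph{not} a spurious summand of $V_{3,3}/G_{3,3}$ (consistent with the decomposition of the plethysm $h_2[e_n]$ mentioned in the paper's final remarks, which contains no $S^{1^{2n}}$ for odd $n$). The conclusion you want is still true, but the witness must be chosen differently: by Corollary~\ref{eigencor2}, $w_j=0$ exactly when $n-j$ is even, so for $n=m\ge 2$ the index $j=n-2\ge 0$ always yields an extra summand $S^{2^{n-2}1^{4}}$ in $\ker\varphi$, and that is what precludes $V_{n,n}/G_{n,n}\cong S^{2^n}$. The paper's proof avoids this pitfall by invoking the full kernel description (\ref{d=n}) rather than singling out the sign representation.
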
 
We will  introduce a linear operator that will enable us to prove this result (and thereby prove  Theorems~\ref{mainth2} and~\ref{mainth}).  First we note that
due to the column relations, as  an $\sg_{n+m}$--module, $V_{n,m}$ is isomorphic to the representation of $\sg_{n+m}$ induced from the sign representation of the Young subgroup $\sg_n \times \sg_m$:
\[ V_{n,m} \cong_{\sg_{n+m}}  (\sgn_n \times \sgn_{m} ) \uparrow _{\sg_n \times \sg_{m}}^{\sg_{n+m}}.  \]  
Hence by  Pieri's rule,
\begin{equation} \label{freeeq} V_{n,m} \cong_{\sg_{n+m}} \bigoplus _{i=0}^{m} S^{2^i1^{n+m-2i}}.\end{equation}

  Now consider the linear operator $\varphi: V_{n,m} \to V_{n,m}$ defined on  basis elements by
$$\varphi(v_S) = g_S.$$  It is not difficult to see that $\varphi$ is an $\sg_{n+m}$-module homomorphism whose image is $G_{n,m}$.

 \begin{lemma} \label{decomplem} Let $m \le n$.
The operator $\varphi$ acts as multiplication by a scalar on each irreducible submodule of $V_{n,m}$.
Moreover, as  $\sg_{n+m}$--modules, \begin{equation} \label{kereq} \ker \varphi \cong V_{n,m} / G_{n,m}. \end{equation}
\end{lemma}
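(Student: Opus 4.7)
The plan is to combine the multiplicity-free decomposition (\ref{freeeq}) with Schur's lemma. Since $V_{n,m}$ decomposes as $\bigoplus_{i=0}^{m} S^{2^i 1^{n+m-2i}}$ with each irreducible appearing exactly once, any $\sg_{n+m}$-equivariant endomorphism is forced to preserve each summand and act on it as a scalar. Once that is in hand, the identity $\ker \varphi \cong V_{n,m}/G_{n,m}$ is essentially automatic, because a scalar-acting endomorphism on a multiplicity-free module splits its source as an internal direct sum of its kernel and its image.

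The first step is to check that $\varphi$ is an $\sg_{n+m}$-module homomorphism. It suffices to verify $\varphi(\sigma \cdot v_S) = \sigma \cdot \varphi(v_S)$ for basis elements $v_S$ and $\sigma \in \sg_{n+m}$. Under the action on tabloids, $\sigma \cdot v_S = \varepsilon(\sigma, S)\, v_{\sigma(S)}$ for a sign $\varepsilon(\sigma, S)$ coming from reordering the columns of $\sigma \cdot t_S$. The Garnir element $g_S$ is defined by a combinatorial recipe (subtract from $v_S$ all tabloids obtained by swapping an $m$-subset of column $1$ with the top $m$ entries of column $2$ in order); this recipe is preserved by relabelling entries via $\sigma$, and the same column-reordering sign $\varepsilon(\sigma, S)$ is picked up on every term. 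Thus $\sigma \cdot g_S = \varepsilon(\sigma, S)\, g_{\sigma(S)}$, and equivariance of $\varphi$ follows.

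Now I invoke Schur's lemma. By (\ref{freeeq}), the distinct irreducible submodules of $V_{n,m}$ are pairwise non-isomorphic, so the isotypic decomposition is the decomposition into irreducibles $V_{n,m} = \bigoplus_{i=0}^{m} W_i$ with $W_i \cong S^{2^i 1^{n+m-2i}}$. The equivariant map $\varphi$ must send each $W_i$ into the $W_i$-isotypic component, which is just $W_i$ itself, and Schur's lemma forces $\varphi|_{W_i} = c_i \cdot \mathrm{id}_{W_i}$ for some scalar $c_i \in \C$. This is the first claim of the lemma.

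For the second claim, partition the indices into $I_0 = \{i : c_i = 0\}$ and $I_1 = \{i : c_i \neq 0\}$. Then
\[
\ker \varphi = \bigoplus_{i \in I_0} W_i, \qquad \im \varphi = \bigoplus_{i \in I_1} W_i,
\]
so $V_{n,m} = \ker \varphi \oplus \im \varphi$ as $\sg_{n+m}$-modules. Since $\im \varphi = G_{n,m}$ by construction of $\varphi$, we conclude $V_{n,m}/G_{n,m} \cong \ker \varphi$. The only non-routine step is the equivariance check, and even that reduces to tracking one column-sorting sign uniformly across the terms of $g_S$; the rest of the argument is a formal consequence of the multiplicity-free structure of $V_{n,m}$.
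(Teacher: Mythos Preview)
Your argument is correct and follows exactly the paper's approach: the paper's proof is the single line ``This follows from the fact that $V_{n,m}$ is multiplicity-free, as indicated in (\ref{freeeq}), and Schur's Lemma,'' and you have simply spelled this out, together with a verification of the equivariance of $\varphi$ that the paper asserts without proof just before the lemma. There is nothing to add.
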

\begin{proof} 
This follows from the fact that $V_{n,m}$ is multiplicity-free, as indicated in  (\ref{freeeq}), and Schur's Lemma.
\end{proof}

It follows from (\ref{kereq}) that to prove Theorem~\ref{twocolth}, we need only show that the kernel of $\varphi$ is isomorphic to $S^{2^m1^{n-m}}$ if and only if $m <n$  or $m=n=1$.  This is handled by the following theorem and its corollaries. 

\begin{theorem} \label{eigens} Let $m \le n$.
The operator $\varphi:V_{n,m} \to V_{n,m}$ acts as multiplication by the scalar 
\be \label{dneqn} w _i := 1-{n-i \choose m-i}(-1)^{m-i},\ee
on the irreducible submodule  isomorphic to $S^{2^i1^{(n+m)-2i}}$ for $i=0,1,\dots,m$.
\end{theorem}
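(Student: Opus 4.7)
By Lemma~\ref{decomplem}, $\varphi$ already acts as some scalar $\omega_i$ on each irreducible summand $S^{2^i 1^{n+m-2i}}$ of $V_{n,m}$; the task is to show $\omega_i = w_i$. The plan is to restrict attention to an invariant subspace that meets each irreducible in a line, and diagonalize $\varphi$ there.

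Consider the Young subgroup $\sg_n\times\sg_m$ in which $\sg_n$ permutes $[1,n]$ and $\sg_m$ permutes $[n+1,n+m]$. Since $V_{n,m}=\mathrm{Ind}_{\sg_n\times\sg_m}^{\sg_{n+m}}(\sgn\otimes\sgn)$, Frobenius reciprocity combined with the decomposition (\ref{freeeq}) implies that the $\sgn\otimes\sgn$-isotypic subspace $U\subseteq V_{n,m}$ under $\sg_n\times\sg_m$ has dimension $m+1$, and each summand $S^{2^i 1^{n+m-2i}}$ intersects $U$ in a one-dimensional subspace $U_i$. Since $\varphi$ is $\sg_{n+m}$-equivariant it preserves $U$ and, by Schur's lemma, acts on $U_i$ as multiplication by $\omega_i$.

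Next, I would build an explicit basis $\{W_a : a=n-m,\ldots,n\}$ of $U$ indexed by the $m+1$ orbits of $\sg_n\times\sg_m$ on $\binom{[n+m]}{n}$ (with $a=|S\cap[n]|$); each $W_a$ is obtained by applying the $\sgn\otimes\sgn$-idempotent to an orbit representative $v_{S_a}$. Applying Lemma~\ref{coefth} term by term to $\varphi(v_{S_a})=g_{S_a}$ and then projecting yields an explicit $(m+1)\times(m+1)$ matrix $M$ representing $\varphi|_U$, whose entries are signed counts of subsets $T$ with $|T\cap S_a|=n-m$ falling into each orbit. The remaining step is to diagonalize $M$ and identify each eigenline with the appropriate $U_i$.

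The eigenvalue formula $w_i=1-\binom{n-i}{m-i}(-1)^{m-i}$ should emerge as follows: the $1$ is the diagonal contribution of the $v_{S_a}$ term in $g_{S_a}$; the binomial coefficient $\binom{n-i}{m-i}$ arises by counting, after freezing the $i$ slots that encode the copy of $\sgn\otimes\sgn$ inside $S^{2^i 1^{n+m-2i}}$, the number of ways to choose the $m-i$ swap positions among the remaining $n-i$ slots of column~1; and the sign $(-1)^{m-i}$ comes from the sign pattern in Lemma~\ref{coefth}. The main obstacle will be the careful sign bookkeeping in computing the entries of $M$, together with matching each eigenvector of $M$ to the correct irreducible $S^{2^i 1^{n+m-2i}}$ (rather than merely identifying the unordered multiset of eigenvalues); this matching can be carried out by an additional character computation on $U$, or by leveraging compatibility with a chain of Young subgroup restrictions.
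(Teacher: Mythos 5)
Your setup is correct: by Frobenius reciprocity and the multiplicity-freeness in (\ref{freeeq}), the $\sgn\otimes\sgn$-isotypic subspace $U$ of $V_{n,m}$ under $\sg_n\times\sg_m$ is $(m+1)$-dimensional and meets each irreducible summand in a line $U_i$, so $\varphi|_U$ determines all the scalars. This is a genuinely different framing from the paper, which never restricts to an isotypic subspace; instead it produces, for each $i$, an explicit nonzero vector of the unique copy of $S^{2^i1^{(n+m)-2i}}$ inside $V_{n,m}$ --- namely $r_tf_tv_T$, a scalar multiple of $e_tv_T$ for a carefully chosen tableau $t$ --- normalized so that its coefficient on $v_T$ is $1$, and then reads off the eigenvalue directly as $\langle \varphi(r_tf_tv_T),v_T\rangle$ via Lemma~\ref{coefth} and a sign/inversion count.

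The problem is that your proposal stops exactly where the work begins, and the two steps you defer are not routine. First, the entries of $M$ are not as you describe: the diagonal entry in the orbit basis is not just the $1$ coming from $v_{S_a}$, since $g_{S_a}$ can contain other terms $v_T$ with $|T\cap[n]|=|S_a\cap[n]|$; computing $M$ requires the same careful sign bookkeeping the paper does, and then one must still find the characteristic polynomial of a full $(m+1)\times(m+1)$ matrix rather than a single inner product. Second, and more seriously, the matching of eigenlines to irreducibles cannot in general be extracted from $M$ at all: when $m=n$ the values $w_i$ take only the two values $0$ and $2$ (Corollary~\ref{eigencor2}), so the eigenspaces of $M$ are not lines and the matrix alone cannot tell you which $U_i$ sits in which eigenspace. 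Resolving this forces you to identify each $U_i$ explicitly --- e.g.\ by applying a Young symmetrizer to a vector of $U$ --- which is essentially the computation the paper performs from the outset. As written, the proposal is a plausible plan with the two decisive computations (the matrix and the matching) missing, so it does not yet constitute a proof.
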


 \begin{rem} In the case that $m=n-1$, Theorem~\ref{eigens} reduces to Theorem~2.4 of \cite{FHSW2}, which was given in the setting  of $n$-ary Jacobi relations, where only the $m=n-1$ case arises.     
\end{rem}

\begin{corollary} \label{eigencor1}
For $m<n$, the operator $\varphi$ has $m+1$ distinct eigenvalues $w_0, w_1,\dots,w_m$.
Moreover, if $E_i$ is the eigenspace corresponding to $w _i$ then as $\sg_{n+m}$-modules,
$$E_i\cong S^{2^i1^{(n+m)-2i}}$$
for each $i=0,1, \ldots , m$. Consequently, 
\begin{equation} \label{nulleq} \ker \varphi \,\, \cong_{\sg_{n+m}}\,\, S^{2^m1^{n-m}}.
\end{equation}
\end{corollary}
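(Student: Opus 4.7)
My plan is to deduce the corollary from Theorem~\ref{eigens} together with the multiplicity-free decomposition (\ref{freeeq}) by verifying two purely numerical facts about the scalars $w_i$: that they are pairwise distinct when $m<n$, and that $w_m=0$.

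The setup is as follows. By (\ref{freeeq}), the module $V_{n,m}$ decomposes as a direct sum of pairwise non-isomorphic irreducibles $S^{2^i 1^{(n+m)-2i}}$ for $i=0,1,\dots,m$. By Theorem~\ref{eigens}, $\varphi$ acts as the scalar $w_i$ on the summand corresponding to $i$. Once I know the $w_i$ are all distinct, the isotypic (= irreducible, since multiplicity free) component for $S^{2^i 1^{(n+m)-2i}}$ will coincide with the $w_i$-eigenspace $E_i$, giving the description of the eigenspaces. The kernel assertion (\ref{nulleq}) is then immediate once I check that $w_m=0$, since this identifies $\ker\varphi$ with the single eigenspace $E_m$.

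For $w_m$: plugging $i=m$ into (\ref{dneqn}) gives $w_m = 1 - \binom{n-m}{0}(-1)^0 = 0$, which is the required vanishing and also makes precise the statement that the kernel picks out the summand of shape $2^m 1^{n-m}$. For distinctness, set $p=n-m$ (so $p\ge 1$ under the hypothesis $m<n$) and reindex via $a=m-i$, $b=m-j$, so that $w_i - w_j = (-1)^b \binom{p+b}{b} - (-1)^a\binom{p+a}{a}$. If $a$ and $b$ have the same parity, then $w_i=w_j$ forces $\binom{p+a}{a}=\binom{p+b}{b}$, which is impossible because $k\mapsto \binom{p+k}{k}$ is strictly increasing for $p\ge 1$; if $a$ and $b$ have different parities, then $w_i = w_j$ forces a positive integer to equal the negative of another positive integer, again impossible. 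Hence $w_0,\dots,w_m$ are pairwise distinct.

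There is no real obstacle here — the whole content of the corollary is contained in Theorem~\ref{eigens}, with the role of the hypothesis $m<n$ entering only through the monotonicity of the binomial coefficient $\binom{p+k}{k}$ needed for distinctness. The only mild subtlety is making sure that both the same-parity and opposite-parity cases are covered, and observing that the hypothesis $m<n$ is exactly what rules out $p=0$ (in which case $\binom{p+k}{k}=1$ for all $k$ and the even-$a$ values collapse). That boundary case $m=n$ is precisely the one excluded (except the trivial $m=n=1$), consistent with the statement of Theorem~\ref{twocolth}.
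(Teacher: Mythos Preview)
Your argument is correct and is exactly the natural deduction the paper leaves implicit: the corollary is stated without a separate proof, and what you have supplied --- the elementary verification that $w_m=0$ and that the $w_i$ are pairwise distinct for $p=n-m\ge 1$ via monotonicity of $k\mapsto\binom{p+k}{k}$ and a parity split --- is precisely how one fills in the details from Theorem~\ref{eigens} and the multiplicity-free decomposition~(\ref{freeeq}).
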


\begin{corollary} \label{eigencor2}
 For $m=n$ the operator $\varphi$ has the eigenvalues $0$ and $2$, with
$$ w_i= 
\begin{cases} 
0 & \text{if } n-i \text{ is even}, \\
2& \text{if } n-i \text{ is odd},
\end{cases}
$$
for each $i=0, \ldots , n$. Consequently, 
\be \label{d=n} \ker \varphi \,\, \cong_{\sg_{n+m}}  \,\, \bigoplus^n_
{\begin{subarray}{c} i=0 \\ n-i \text{ even} \end{subarray}} S^{2^i1^{2n-2i}},
\ee
and the eigenspace with eigenvalue 2 is given by
\[  \bigoplus^n_
{\begin{subarray}{c} i=0 \\ n-i \text{ odd} \end{subarray}} S^{2^i1^{2n-2i}}.
\]
\end{corollary}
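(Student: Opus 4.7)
The plan is to deduce Corollary~\ref{eigencor2} as a direct specialization of Theorem~\ref{eigens} to the case $m = n$, together with the multiplicity-free decomposition (\ref{freeeq}) and Lemma~\ref{decomplem}. First I would substitute $m=n$ into the formula $w_i = 1 - \binom{n-i}{m-i}(-1)^{m-i}$. Since $\binom{n-i}{n-i} = 1$, this simplifies immediately to
\[
w_i = 1 - (-1)^{n-i},
\]
and the parity case analysis gives $w_i = 0$ when $n-i$ is even and $w_i = 2$ when $n-i$ is odd. This establishes the claimed formula for the scalars, and shows that the only possible eigenvalues are $0$ and $2$; both are in fact attained whenever $n \ge 1$, since $i=n$ yields $w_n=0$ and $i=n-1$ yields $w_{n-1}=2$.

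Next I would extract the eigenspace decomposition. By (\ref{freeeq}) with $m=n$,
\[
V_{n,n} \cong_{\sg_{2n}} \bigoplus_{i=0}^{n} S^{2^i 1^{2n-2i}},
\]
and these irreducibles are pairwise nonisomorphic, so the decomposition is multiplicity free. By Lemma~\ref{decomplem}, $\varphi$ acts as a scalar on each summand, so the two eigenspaces are obtained by collecting the summands corresponding to each scalar. Thus
\[
E_0 = \bigoplus_{\substack{i=0 \\ n-i \text{ even}}}^{n} S^{2^i 1^{2n-2i}}, \qquad E_2 = \bigoplus_{\substack{i=0 \\ n-i \text{ odd}}}^{n} S^{2^i 1^{2n-2i}}.
\]
Finally, invoking the isomorphism $\ker \varphi \cong V_{n,n}/G_{n,n}$ from Lemma~\ref{decomplem} (and noting $\ker \varphi = E_0$) yields the claimed description of the kernel.

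There is no serious obstacle here: the hard analytic content is contained in Theorem~\ref{eigens}, and the present corollary is essentially a parity computation followed by a bookkeeping step using Schur's lemma. The only minor point worth flagging is the verification that both eigenvalues actually occur (so that the operator is not simply zero and not simply $2\cdot \mathrm{id}$), which follows from the existence of indices $i$ of both parities in $\{0,1,\dots,n\}$ when $n \ge 1$.
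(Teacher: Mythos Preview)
Your proposal is correct and matches the paper's approach: the paper states Corollary~\ref{eigencor2} without a separate proof, treating it as an immediate specialization of Theorem~\ref{eigens} together with the multiplicity-free decomposition~(\ref{freeeq}), which is exactly what you do. Your explicit computation $w_i = 1 - \binom{n-i}{n-i}(-1)^{n-i} = 1 - (-1)^{n-i}$ and the ensuing eigenspace bookkeeping via Lemma~\ref{decomplem} are precisely the intended argument.
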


\begin{proof}[Proof of Theorem~\ref{eigens}] 
The proof   follows (and at the same time corrects) the argument for the  $m=n-1$ case given in \cite[Theorem 2.4]{FHSW2}.  Indeed, if we set $m=n-1$ in the following proof, we get  a corrected version of the proof in \cite{FHSW2}, which was given in the language of  $n$-ary Jacobi relations, where only the $m=n-1$ case was relevant.   
On occasion, we  will refer back to the arguments in the proof of \cite[Theorem 2.4]{FHSW2}.

By Lemma~\ref{decomplem}, $\varphi$ acts as a scalar on each irreducible submodule. 
 To compute the scalar, we start by letting $t$ be the standard Young tableau of shape $2^i1^{n+m-2i}$ whose first column is the concatenation of the sequences $1,2,\dots,n$ and $n+i+1,n+i+2, \dots, n+m$ and whose second column is the sequence $n+1, n+2, \dots, n+i$.  
Now set $r_t:= \sum_{\alpha \in R_t} \alpha$ and $c_t:= \sum_{\beta \in C_t}\sgn(\beta) \beta$, where  $R_t$ and $C_t$ denote the row and column stabilizer of $t$, respectively.  Recall from (\ref{symmeq}) that  $e_t = r_t c_t$  and that the Specht module $S^{2^i 1^{n+m-2i}}$ is the submodule of the regular representation  $\C \sg_{n+m}$ spanned by $\{\tau e_t : \tau \in \sg_{n+m} \} $.

Now let $F_t$ be the set of permutations $\sigma$ in $C_t$ that  fix the elements of  $\{ n+1, n+2, \dots , n+i\}$ and satisfy $$\sigma(1)<\cdots <\sigma(n), \hskip .3cm  \sigma(n+i+1)<\cdots <\sigma(n+m),$$  and let $$f_t:=\sum_{\sigma \in F_t} \sgn(\sigma) \sigma. $$
Also let $T :=\{1,2, \dots, n\}$.  We 
claim\footnote{In the proof of \cite[Theorem 2.4]{FHSW2}, the same claim was made {\em erroneously}   for $f_t:=\sum_{\sigma \in F_t} \sgn(\sigma) \sigma^{-1}$ in the case $m=n-1$. Here we provide a proof of the correct claim and then proceed with an argument analogous to what is given in   \cite{FHSW2}. } that  
\be \label{cfeq} c_t v_T= n!(m-i)! i! f_t v_T.\ee
Indeed, $$c_t = \sum_{\sigma \in F_t} \sgn(\sigma) \sigma  \sum_{\tau \in \sg_A \times \sg_B \times \sg_C } \sgn(\tau) \tau,$$
where  $A=\{1,\dots,n\}$, $B= \{n+i+1,\dots, n+m\}$, and $C=\{n+1,\dots,n+i\}$. It follows from the antisymmetry of the columns of $\tau v_T$ that 
$$c_t v_T = n!(m-i)! i!\sum_{\sigma \in F_t} \sgn(\sigma) \sigma v_T = n!(m-i)! i! f_t v_T,$$
as claimed.

It follows from (\ref{cfeq}) that   $r_t f_t v_T$  is a  scalar multiple of $e_t v_T$. 
Since the coefficient of $v_T$ in the expansion of $r_tf_t v_T$ is $1$, we have $e_tv_T \ne 0$.
Now following the proof of \cite[Theorem 2.4]{FHSW2}, we can show  that the subspace spanned by $\{\pi e_t v_T : \pi \in \sg_{n+m} \} $ is the unique submodule of $V_{n,m}$ isomorphic to $S^{2^i1^{(n+m)-2i}}$. 
This allows us to abuse notation by letting $S^{2^i1^{(n+m)-2i}}$ denote this submodule of $V_{n,m}$.

Since $r_t f_t v_T$ is a scalar multiple of $e_tv_T$, it is in $S^ {2^i1^{(n+m)-2i}}$.  It follows that 
$$\varphi(r_t f_t v_T ) = c r_t f_t v_T ,$$ for some scalar $c$, which we want to show equals $w_i$.
 Similarly to the proof of \cite[Theorem 2.4]{FHSW2}, we can  use Lemma~\ref{coefth} to show that 
$$c=\langle \varphi(r_t f_t v_T), v_T \rangle = 1+  \sum_{\substack{S \in \binom{[n+m]}{n} \setminus \{T\}\\ S\cap T =p}} \langle r_t f_t v_T , v_S\rangle \langle\varphi(v_S),v_T\rangle.$$
Let $S \cap T=  \{d_1,d_2,\dots,d_{p}\}$,  in which case $ \langle\varphi(v_S),v_T\rangle = (-1)^{d_1+ \dots + d_{p} +\binom {p+1}2+1}$.  Hence 
\be \label{dsumeq} c = 1+ \sum_{D \in \binom{[n]}{ p} }(-1)^{\left (\sum_{d \in D} d \right ) +\binom{p+1}2+1} \langle r_t f_t v_T , v_{S(D)}\rangle,\ee
where  $$S(D) = D \cup \{n+1,n+2,\dots,n+m\}.$$

To compute $ \langle r_t f_t v_T , v_{S(D)}\rangle$, we must consider how we get $v_{S(D)}$ from the action on $v_T$ of a permutation  $\alpha \sigma$, where $\alpha \in R_t$ and $\sigma \in F_t$.
In order to get $S(D)$ for some $D:=\{d_1<d_2<\dots<d_p\}  \in \binom{[n]}{ p} $, we must have that $D \in \binom{\{i+1,\dots,n\}} p$ and that $\sigma$ fixes the elements of $\{n+1,\dots, n+i\}$ and  interchanges 
$\{n+i+1, \ldots , n+m\}$ with $\{i+1,\dots,n\} \setminus D$ putting $D$ in positions $i+1,\dots,i+p$. More precisely, in one line notation, $\sigma$ restricted to the first column of $t$ is the concatenation of  the sequences $$ 1,\dots, i $$ $$ d_1,\dots,d_p$$ 
$$ n+i+1,\dots, n+m$$
$$i+1, \ldots , d_1-1, d_1+1, \ldots, d_2-1,d_2+1, \ldots, d_p-1, d_p+1, \ldots, n.$$
In one line notation, $\sigma$ restricted to the second column is  $n+1, \dots, n+i $.
 We must also have
$$\alpha = (1, n+1)(2, n+2)\cdots (i, n+i),$$
giving us $S(D)$ in the first $n$ entries of the first column of $\alpha\sigma t$.

The number of inversions in $\sigma$ restricted to the first column of $t$ is $$\sum_{k=1}^p (d_k-k -i )+ (m-i)(n-i-p) = \sum_{k=1}^p d_k -\binom {p+1}2 -ip+ (m-i)^2,
$$  and $\sigma$ restricted to the second column of $t$ has no inversions.
It follows that 
\be \label{sgneq} \sgn(\sigma) = (-1)^{\left (\sum_{k=1}^p d_k \right)-\binom {p+1}2 -ip + (m-i)}.\ee

In terms of our basis, the first column of 
$ \alpha \sigma v_T$ (written horizontally) is   $$n+1, \ldots , n+i, {\color{red} d_1}, \ldots , {\color{red} d_p}, n+i+1,  \ldots , n+m$$ and the second column is $$1, \ldots , i, i+1, \ldots, d_1-1, d_1+1, \ldots, d_p-1, d_p+1, \ldots ,n . $$
To put this basis in canonical form, we need to put the first column in  increasing order by moving all the $d_k$'s all the way to the top of the column. That requires $i$ transpositions for each $d_k$, resulting in the sign $(-1)^{ip}$. The second column is already in increasing order.
This yields $ \alpha \sigma v_T =  (-1)^{ip} v_{S(d_1,d_2,\dots,d_p)}$.  
Hence by (\ref{sgneq}), $$\sgn(\sigma) \alpha \sigma v_T = (-1)^{\left (\sum_{k=1}^p d_k \right)-\binom {p+1}2  + (m-i)}v_{S(d_1,d_2,\dots,d_p)}.$$
We can now conclude that for all $D\in \binom{[n]}p$,
$$\langle r_t f_t v_T, v_{S(D)} \rangle = \begin{cases}  (-1)^{\left (\sum_{d \in D} d \right) -{p+1\choose 2} +(m-i)} &\mbox{ if } D \in \binom{\{i+1,\dots ,n\}} p \\
0 &\mbox{ otherwise.}\end{cases} $$
By substituting this into (\ref{dsumeq}), we get the eigenvalue,
$$c = 1- \sum_{D \in \binom{\{i+1,\dots ,n\}} p} (-1)^{m-i} = 1 - \binom{n-i}{n-m} (-1)^{m-i} = w_i, $$
which is all that is needed to complete the proof of the theorem.
\end{proof}

 \begin{proof}[Proof of Theorem~\ref{twocolth}]  
We can now use  (\ref{kereq}), (\ref{nulleq}), and the $n=1$ case of (\ref{d=n}) to conclude that the ``if" direction of Theorem~\ref{twocolth} holds.  The ``only if" direction also follows from  (\ref{kereq}), (\ref{nulleq}), and   (\ref{d=n}). 
\end{proof}

\vspace{.1in}\noindent {\it Final Remarks.} Corollary~\ref{eigencor2}  has another interesting consequence.  Indeed, the decomposition  of   $\ker\varphi$ given in (\ref{d=n}) yields a  decomposition of $V_{n,n}/G_{n,n}$ into irreducibles.  Since $V_{n,n}/G_{n,n}$ is clearly isomorphic to the composition product  of the trivial representation of $\sg_2$ and the sign representation  of $\sg_n$, we recover a well-known result on the decomposition of this composition product into irreducibles; see e.g. \cite[p. 140]{Mac}.

One may ask what happens if the  $k$ in the Garnir relations  (\ref{garnireq}) is restricted to a single value other than the minimum 1 and the maximum $l_{c+1}$.   For which partitions $\lambda$ do such Garnir relations generate the entire set of  Garnir relations? Is there a
generalization of Theorem~\ref{mainth2} that answers this question?  We leave these questions open.\footnote{Motivated by an earlier version of our paper,
Maliakas, Metzaki, and  Stergiopoulou address these questions in~\cite{MMS}.  These questions are also addressed in a  forthcoming paper of Friedmann.}

\vskip .5cm
\no \large {\bf Acknowledgements}
\normalsize
\\
We are grateful to Mihalis Maliakas for pointing out an error in the statement of Theorem~\ref{mainth2} in an earlier version of this paper.  We also thank the anonymous referees for their helpful comments.

\end{document}